\def\deg{{\rm deg}}
\newtheorem{theorem}{{\bf Theorem}}
\newtheorem{corollary}[theorem]{{\bf Corollary}}
\newtheorem{proposition}[theorem]{{\bf Proposition}}
\newtheorem{lemma}[theorem]{{\bf Lemma}}
\newtheorem{example}{{\bf Example}}
\begin{document}

\begin{frontmatter}
\title{On the square-freeness of the offset equation to a rational planar curve}


\author[a]{Juan Gerardo Alc\'azar\fnref{proy,proy2}}
\ead{juange.alcazar@uah.es}
\author[b]{Jorge Caravantes\fnref{proy}}
\ead{jcaravan@mat.ucm.es}
\author[c]{ Gema M. Diaz-Toca\fnref{proy,proy3}}
\ead{gemadiaz@um.es}

\address[a]{Departamento de F\'{\i}sica y Matem\'aticas, Universidad de Alcal\'a,
E-28871 Madrid, Spain}
\address[b]{Departamento de \'Algebra, Universidad Complutense de Madrid, E-28040 Madrid, Spain}
\address[c]{Departamento de Matem\'atica Aplicada, Universidad de Murcia,  E-30100 Murcia, Spain}


\fntext[proy]{Partially supported by the Spanish Ministerio de Econom\'{\i}a y Competitividad and by the European Regional Development Fund (ERDF), under the project  MTM2014-54141-P.}

\fntext[proy2]{Member of the Research Group {\sc asynacs} (Ref. {\sc ccee2011/r34}) }

\fntext[proy3]{Partially supported by the Research Group {\sc E078-04} of the University of Murcia}



\begin{abstract}
It is well known \cite{Far2} that an implicit equation of the offset to a rational planar curve can be computed by removing the extraneous components of the resultant of two certain polynomials computed from the parametrization of the curve. Furthermore, it is also well known that the implicit equation provided by the non-extraneous component of this resultant has at most two irreducible factors \cite{Rafa1}. In this paper, we complete the algebraic description of this resultant by showing that the multiplicity of the factors corresponding to the offset can be computed in advance. In particular, when the parametrization is proper, i.e. when the curve is just traced once by the parametrization, we prove that any factor corresponding to a simple component of the offset has multiplicity 1, while the factor corresponding to the special component, if any, has multiplicity 2. Hence, if the parametrization is proper and there is no special component, the non-extraneous part of the resultant is square-free. In fact, this condition is proven to be also sufficient. Additionally, this result provides a simple test to check whether or not a given rational curve is the offset of another curve. 
\end{abstract}

 \end{frontmatter}
 
Keywords: Offset curves; planar rational curves; squarefree factorization; implicit equations.

\section{Introduction}\label{section-introduction}

{\it Offset curves} have been largely studied in the field of Computer Aided Geometric Design (see \cite{Farouki} for an overview on offset curves). These curves allow to give thickness to a thin object, and have been successfuly used in contexts like manufacturing, graphic design or robotics \cite{Farouki}. Furthermore, different theoretical aspects related to offset curves have been considered in the literature: algebraic properties \cite{Far2}, \cite{Rafa1}, topology \cite{AS07}, \cite{A08}, parametrization \cite{AS97}, singularities \cite{paper}, \cite{Kim}, \cite{Maekawa}, \cite{Seong} or genus computation \cite{AS99}, \cite{Fukushima}, to give a non-exhaustive list of topics or papers. Many of these papers focus on planar curves described by means of rational parametrizations, since this type of parametrizations is popular and widely used in CAGD. 

In the seminal paper \cite{Far2}, a method to find an implicit equation for the offset of a planar rational curve is given. This method, which we briefly summarize in Section \ref{sec-prelim} of this paper, requires to compute the resultant of two polynomials that are built from the parametrization. Afterwards, in certain cases we need to remove an extraneous factor, whose components can be computed a priori. The resulting equation $F(x,y)$ has at most two irreducible components \cite{Rafa1}, which can be of either {\it simple} or {\it special} nature, in the terminology of \cite{Rafa1}. Hence, $F(x,y)$ can be written as $F(x,y)=(f_1(x,y))^r\cdot (f_2(x,y))^s$, where $f_1(x,y)$ and $f_2(x,y)$ are irreducible polynomials, and $f_2(x,y)$ is assumed to be constant in the case when the offset has just one component. However, no observation is made in the literature on the values of $r,s$, i.e. on the multiplicity structure of $F(x,y)$. The goal of this paper is to fill this gap, thereby completing the description of the algebraic structure of the resultant giving rise to the offset of a rational curve.

In this paper we will see that the multiplicities $r,s$ can be computed in advance, and are related to two notions. The first one, which has already been mentioned in the preceding paragraph, is the simple or special nature of the corresponding offset component. The second one is the {\it tracing index} \cite{SWPD} of the parametrization. Essentially, the tracing index describes how many times a rational parametrization traces a given curve, or, in other words, how many times the parametrization traces a regular point of the curve. If the tracing index is 1, then the parametrization is said to be {\it proper}; this means that the degree of the parametrization is minimum. In this paper we prove that if the parametrization is proper then $r=s=1$ whenever there is no special component, and $r=1$, $s=2$ whenever there is a special component; in particular, if we have properness and there is no special component, then $F(x,y)$ is square-free. When the parametrization has tracing index $n> 1$, we get $r=s=n$ if there is no special component, and $r=n$, $s=2n$ if there is a special component.  As a consequence, $F(x,y)$ is square-free iff the parametrization of the curve is proper, and the offset has no special component (see Corollary \ref{corfinal}).

The motivation for this paper came up while the authors were developing the results of \cite{paper}, where the problem of computing the self intersections of an offset curve was studied. Indeed, at a certain point in \cite{paper} (see Remark 2 in \cite{paper}) it was necessary to study whether or not the polynomial $F(x,y)$ was square-free. Furthermore, as we show in this paper (see Theorem \ref{charact}), the square-free character of $F(x,y)$ also characterizes the situation when the offset has no special component, which is a necessary hypothesis, for instance, in \cite{paper} and \cite{Fukushima}. In turn \cite{Rafa1}, this allows to recognize whether or not a given rational curve is the offset to another curve.   

The paper consists of two sections. In Section \ref{sec-prelim} we provide some preliminary definitions and results. In Section \ref{sec-main} we prove the theorems on the multiplicity structure of $F(x,y)$, and we illustrate them with some examples.

\section{Preliminaries and generalities} \label{sec-prelim}

%

Let ${\mathcal C}$ be a real, rational plane curve, not a line or a circle, parametrized by
\begin{equation}\label{curve}
\phi(t)=\left(\frac{p_1(t)}{q_1(t)},\frac{p_2(t)}{q_2(t)}\right),
\end{equation}
where $p_i(t),q_i(t)$, $i=1,2$, are polynomials with rational coefficients and $\gcd(p_1,q_1)=\gcd(p_2,q_2)=1$. It is well-known that the resultant provides the implicit equation of the curve. From Theorem 4.41 in \cite{SWPD}, 
\begin{equation} \label{res}
\mbox{Res}_t(q_1(t) x-p_1(t),q_2(t)y-p_2(t))=c\cdot(f(x,y))^n,
\end{equation}
where $c$ is a nonzero constant, $f(x,y)$ is the implicit equation of ${\mathcal C}$, and $n$ is the \emph{tracing index} of the parametrization $\phi(t)$, i.e. the number of times the parametrization $\phi(t)$ covers a regular point $(x,y)\in {\mathcal C}$. The interested reader can check \S 1.6 \cite{SWPD} for further reading on the notion of tracing index. Additionally, we say that $\phi(t)$ is {\it proper} iff it is birational, i.e. iff $\phi(t)$ is injective except perhaps for finitely many values of the parameter $t$ (corresponding to the self-intersections of the curve). One can prove (Theorem 4.30 in \cite{SWPD}) that $\phi(t)$ is proper iff the tracing index is 1. Since properness can always be achieved by reparametrizing the curve, if necessary \cite{SWPD}, it is not uncommon to assume that one works with proper parametrizations. However, we do not need to make any a priori assumption on the properness of $\phi(t)$. 

By reducing the components of $\phi(t)$ to common denominator, we can write
\begin{equation} \label{curve}
\phi(t)=\left(\mathcal{X}(t),\mathcal{Y}(t)\right)=\left(\frac{X(t)}{W(t)},\frac{Y(t)}{W(t)}\right),
\end{equation}
where $\gcd(X(t),Y(t),W(t))=1$. Calling
\begin{equation}\label{UV}
U(t)=X'(t)W(t)-X(t)W'(t),\mbox{ }V(t)=Y'(t)W(t)-Y(t)W'(t),
\end{equation} 
we define the \emph{offset} to ${\mathcal C}$ at distance $d\in {\Bbb R}^+$, ${\mathcal O}_d({\mathcal C})$, as the \emph{Zariski closure} of the set of points $(x,y)=\phi_d(t)$, where 
\begin{equation} \label{offset}
\phi_d(t)=\left(\frac{X(t)}{W(t)}\pm d \frac{V(t)}{\sqrt{U^2(t)+V^2(t)}},\frac{Y(t)}{W(t)}\mp d\frac{U(t)}{\sqrt{U^2(t)+V^2(t)}}\right),
\end{equation}
with $U^2(t)+V^2(t)\neq 0$, $W(t)\neq 0$. Hence, $(x,y)=\phi_d(t)$ implies that the Euclidean distance between $(x,y)\in {\mathcal O}_d({\mathcal C})$ and the point $p=\phi(t)\in {\mathcal C}$, measured along the normal line to ${\mathcal C}$ through $p=\phi(t)$, is $d$; in this situation, we say that $p=\phi(t)$ {\it generates} $(x,y)$. By an abuse of the language, we will also say that $(x,y)\in {\mathcal O}_d({\mathcal C})$ is generated by $t$.
Furthermore, when the first sign of $\pm$ and $\mp$ in the expression \eqref{offset} is considered, the geometrical locus described is called the \emph{exterior offset}; if the second sign is chosen, the geometrical locus described this way is called the \emph{interior offset}. The union of the exterior and interior offsets is the whole offset, ${\mathcal O}_d({\mathcal C})$.

 In general, we will say that $P\in {\mathcal C}$ generates $P_d\in {\mathcal O}_d({\mathcal C})$ if $P_d=P\pm d\cdot {\mathcal N}(P)$, where ${\mathcal N}(P)$ is the unitary normal vector to ${\mathcal C}$ at $P$. If $\mbox{lim}_{t\to \infty}\phi(t)$ is an affine point, which happens iff $\deg(X(t))\leq \deg(W(t))$ and $\deg(Y(t))\leq \deg(W(t))$, then $\mbox{lim}_{t\to \infty}\phi_d(t)$ gives rise to two points, which we denote by $P_{\pm \infty}=(x_{\pm \infty},y_{\pm \infty})$. The points $P_{\pm \infty}$ also belong to ${\mathcal O}_d({\mathcal C})$. 

The computation of the implicit equation of ${\mathcal O}_d({\mathcal C})$ is addressed in \cite{Far2}. In order to compute this equation, the following polynomials are introduced: 
\begin{eqnarray} 
\tilde{P}(x,y,t):= U(t)( W(t)x-X(t) ) +V(t)( W(t)y-Y(t) ) = 0, \label{P}\\
\tilde{ Q}(x,y,t):= ( W(t) x - X(t) )^2 + ( W(t)y - Y(t) )^2 - d^2 W^2(t) = 0.\label{Q}  
\end{eqnarray}
%
Roughly speaking, the implicit equation  of ${\mathcal O}_d({\mathcal C})$ is found by eliminating the variable $t$ in the system formed by \eqref{P} and \eqref{Q}. However, in order to avoid as many extraneous components as possible, we first divide $\tilde{P}(x,y,t)$, $\tilde{Q}(x,y,t)$ by their \emph{contents} with respect to $t$.\footnote{Let $f(x_1,\ldots,x_r,x_{r+1},\ldots,x_n)$ be a polynomial in the variables $x_1,\ldots,x_r,x_{r+1},\ldots,x_n$ with coefficients in a unique factorization domain. The \emph{content} $\mbox{cont}_{x_1,\ldots,x_r}(f)$ of $f$ with respect to $x_1,\ldots,x_r$ is the $\gcd$ of the coefficients of $f$, seen as a polynomial in $x_{r+1},\ldots,x_n$ whose coefficients are polynomials in $x_1,\ldots,x_r$. The polynomial $\tilde{f}=\frac{1}{\mbox{cont}_{x_1,\ldots,x_r}(f)}\cdot f$ is called the \emph{primitive part} of $f$ with respect to $x_1,\ldots,x_r$.} The polynomials obtained from $\tilde{P}$ and $\tilde{Q}$ after removing their $t$-contents are denoted by $P(x,y,t)$ and $Q(x,y,t)$, respectively.

Now writing 
\begin{equation}\label{lares}
H(x,y)=\mbox{Res}_t(P(x,y,t),Q(x,y,t)),
\end{equation}
we have (see Theorem 3.6 of \cite{Far2}) that 
\begin{equation}\label{structure}
H(x,y)=F(x,y)\cdot G(x,y),
\end{equation}
where $F(x,y)$ is \emph{an implicit equation} of ${\mathcal O}_d({\mathcal C})$, and $G(x,y)$ is a product of extraneous linear factors. The extraneous linear factors can be computed a priori by using Lemma 3.4 of \cite{Far2}. It is worth observing that although \cite{Far2} assumes that $\phi(t)$ is proper, by following the discussion in \cite{Far2} one can check that the results in Theorem 3.6 and Lemma 3.4 of \cite{Far2} are also valid even if $\phi(t)$ is not proper. 

Notice that we said that $F(x,y)$ is ``an" implicit equation, and not ``the" implicit equation. The reason is that $h(x,y)=0$ is the implicit equation of a curve iff $h(x,y)$ is the polynomial \emph{of minimum degree} implicitly defining the curve. Hence, $F(x,y)$ is ``the" implicit equation of ${\mathcal O}_d({\mathcal C})$ iff it is square-free. We will prove, in Section \ref{sec-main}, that this is certainly the case whenever two conditions, that we will make precise, hold; if some of these conditions fails then $F(x,y)$ is not square-free, although we will see that the multiplicity of its components can be computed in advance. 

The first of these conditions is related to the properness of $\phi(t)$, that we recalled at the beginning of the section. The second condition has to do with the notions of {\it simple} and {\it special} components of an offset curve. These notions were introduced in \cite{Rafa1}, where a more algebraic definition of the offset curve, using an incidence diagram, is given. Based on this incidence diagram, it is proven that ${\mathcal O}_d({\mathcal C})$ has at most two components. Furthermore, an irreducible component of ${\mathcal O}_d({\mathcal C})$ is said to be \emph{simple} if almost every point of that component is generated by just one point of ${\mathcal C}$; otherwise, the component is called \emph{special}. In \cite{Rafa1} it is proven that for almost all $d$, ${\mathcal O}_d({\mathcal C})$ has simple components, and that ${\mathcal O}_d({\mathcal C})$ can have at most one special component. Furthermore, it is also demonstrated that if ${\mathcal O}_d({\mathcal C})$ is irreducible then it is simple, and that special components only appear when computing offsets to offsets (in which case the special component is the original curve).

Additionally, at certain moments in the paper we will distinguish two different types of singular points, {\it local singularities} and {\it self-intersections}. In the first case there is just one branch of the curve going through the point, while in the second case there are at least two different branches of the curve through the point (see Section 2.3 of \cite{paper} for more information). Notice that the number of local singularities of an algebraic curve is always finite. Furthermore, if an algebraic curve is defined by a square-free polynomial then it also has finitely many self-intersections.

We finish with two results related to the preceding notions, that we will use later. The first one is related to the notions of simple and special components of ${\mathcal O}_d({\mathcal C})$.

\begin{lemma}\label{nocircle}
There are finitely many points of ${\mathcal O}_d({\mathcal C})$ generated by more than two points of ${\mathcal C}$.
Furthermore, a component ${\mathcal V}_d$ of ${\mathcal O}_d({\mathcal C})$ is special iff almost all points of ${\mathcal V}_d$ are generated by exactly two points of ${\mathcal C}$.
\end{lemma}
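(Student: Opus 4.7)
My plan would start by showing that on each irreducible component $\mathcal{V}_d$ of $\mathcal{O}_d({\mathcal C})$ the number of generators of a point is generically constant. The incidence set $I=\{(P,Q)\in\mathcal{C}\times\mathcal{O}_d({\mathcal C}):P\text{ generates }Q\}$ is cut out by algebraic equations (a quadric for the distance plus a linear condition for normality), and the first projection $I\to\mathcal{C}$ has every fibre of size exactly two, namely the two offset points $P\pm d\,{\mathcal N}(P)$; hence $I$ is of pure dimension one. Restricting to the preimage of $\mathcal{V}_d$ and invoking the generic-fibre principle for morphisms between curves, I would obtain an integer $k=k(\mathcal{V}_d)$ and a dense open $\mathcal{V}_d^0\subset\mathcal{V}_d$ on which every point has exactly $k$ generators, with $\mathcal{V}_d\setminus\mathcal{V}_d^0$ finite.

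The crux of my argument would be to bound $k(\mathcal{V}_d)\le 2$. I would pick a generic $Q\in\mathcal{V}_d^0$, which may additionally be taken to be a smooth point of $\mathcal{V}_d$, and let $P_1,\ldots,P_k\in\mathcal{C}$ be its distinct generators. By the classical offset-parallel tangency property (a direct computation from $\phi_d(t)=\phi(t)\pm d\,{\mathcal N}(\phi(t))$ shows that $\phi_d'(t)$ is a scalar multiple of $\phi'(t)$ at generic $t$), each generator $P_i$ gives rise to a local branch of $\mathcal{V}_d$ at $Q$ whose tangent is parallel to $T_{P_i}\mathcal{C}$. Smoothness of $\mathcal{V}_d$ at $Q$ then forces all these tangent directions to agree, so the normal lines to $\mathcal{C}$ at the points $P_1,\ldots,P_k$ are pairwise parallel. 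Since each of those normals passes through $Q$ (by the very definition of generator), they must all coincide with the single line $\ell$ through $Q$ perpendicular to $T_Q\mathcal{V}_d$. Each $P_i$ therefore lies in the intersection of $\ell$ with the circle of radius $d$ centred at $Q$, a set of at most two points, so $k(\mathcal{V}_d)\le 2$.

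Both assertions of the lemma will then follow. The set of points of $\mathcal{O}_d({\mathcal C})$ having more than two generators is contained in the union of the finite exceptional sets $\mathcal{V}_d\setminus\mathcal{V}_d^0$ taken over the (at most two) irreducible components of $\mathcal{O}_d({\mathcal C})$, and so is finite, which gives the first statement. For the second, the definition in \cite{Rafa1} says that $\mathcal{V}_d$ is special iff it is not the case that almost every point has a unique generator, i.e.\ iff $k(\mathcal{V}_d)\ge 2$; combined with $k(\mathcal{V}_d)\le 2$ this amounts to $k(\mathcal{V}_d)=2$, which is exactly the statement that almost every point of $\mathcal{V}_d$ is generated by exactly two points of $\mathcal{C}$.

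The hard part will be to justify rigorously that the offset-parallel tangency holds at every generator of a generic $Q$: I need to ensure that all the $P_i$'s are themselves smooth points of $\mathcal{C}$ at which the offset map is non-degenerate, so that the tangent direction they prescribe genuinely equals the intrinsic $T_Q\mathcal{V}_d$. This follows because $\mathcal{C}$ has only finitely many singular points and the offset map $\phi_d$ degenerates (at parameters where $1+d\kappa=0$) only for finitely many values of $t$, so after removing finitely more exceptional points from $\mathcal{V}_d^0$ the geometric picture above is valid; but this exceptional-locus bookkeeping is the step that needs to be written out carefully.
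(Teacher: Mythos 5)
Your proposal is correct and rests on the same key observation as the paper's own proof: every generator of a point $Q$ of the offset lies on the intersection of the normal line to the offset through $Q$ with the circle of radius $d$ centred at $Q$, hence there are at most two of them, and the exceptional points (where this normal line is not unique or the tangency argument degenerates) form a finite set. The incidence-variety and generic-fibre scaffolding you add merely makes explicit the genericity and constancy claims that the paper leaves implicit (its justification of the second assertion is only that it ``follows from the notion of special component''), so the route is essentially the paper's.
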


\begin{proof} Let $F^{\star}(x,y)$ be the square-free part of $F(x,y)$, and let $P_d\in {\mathcal O}_d({\mathcal C})$ be a regular point of the curve $F^{\star}(x,y)=0$, which also defines the offset. The points $P\in {\mathcal C}$ generating $P_d$ are the intersection points of each line ${\mathcal L}_{P_d}$, normal to ${\mathcal O}_d({\mathcal C})$ through $P_d$, the circle $C_d$ centered at $P_d$ of radius $d$, and ${\mathcal C}$. Since by hypothesis $P_d$ is regular there is just one normal line ${\mathcal L}_{P_d}$. Since the intersection of ${\mathcal L}_{P_d}$ and $C_d$ consists of at most two points, and since the curve $F^{\star}(x,y)=0$ has finitely many singularities, the first part holds. The second part follows from the notion of special component. 
\end{proof}

Finally, we recall the next lemma, which is proven in Appendix I of \cite{ACD15}; although in \cite{ACD15} one works with properly parametrized curves, one can check that the proof of the lemma does not depend on the properness of the curve, and therefore it is also valid in the case of non-proper curves.
\begin{lemma}\label{l-aux}
The only points of the offset where the leading coefficients of $P(x,y,t)$, $Q(x,y,t)$ with respect to $t$ simultaneously vanish are $P_{\pm\infty}$, in the case when $P_{\pm\infty}$ are affine points.  
\end{lemma}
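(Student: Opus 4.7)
To prove Lemma \ref{l-aux} I would reduce to analyzing the leading $t$-coefficients of the unreduced polynomials $\tilde P,\tilde Q$ from \eqref{P}--\eqref{Q}. Since the contents divided out to pass from $\tilde P,\tilde Q$ to $P,Q$ are polynomials $c_P(x,y), c_Q(x,y)$ not involving $t$, we have $\mathrm{lc}_t(\tilde P)=c_P\cdot\mathrm{lc}_t(P)$ and $\mathrm{lc}_t(\tilde Q)=c_Q\cdot\mathrm{lc}_t(Q)$ as polynomials in $(x,y)$; hence every common zero of $\mathrm{lc}_t(P)$ and $\mathrm{lc}_t(Q)$ is also a common zero of $\mathrm{lc}_t(\tilde P)$ and $\mathrm{lc}_t(\tilde Q)$, and it suffices to identify the latter.

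The main computation would rewrite $\tilde P,\tilde Q$ in a form whose asymptotic behaviour as $t\to\infty$ is transparent. Using $U=W^2\mathcal X'$ and $V=W^2\mathcal Y'$ one can factor
$$\tilde Q = W^2\bigl[(x-\mathcal X)^2+(y-\mathcal Y)^2-d^2\bigr], \qquad \tilde P = W^3\bigl[\mathcal X'(x-\mathcal X)+\mathcal Y'(y-\mathcal Y)\bigr].$$
Under the affine-point hypothesis, I would develop Laurent expansions $\mathcal X(t)=x_\infty+c_k t^{-k}+O(t^{-k-1})$ and $\mathcal Y(t)=y_\infty+c_k' t^{-k}+O(t^{-k-1})$, where $k\ge 1$ is the least index with $(c_k,c_k')\neq(0,0)$---such $k$ exists because $\mathcal C$ is not a point. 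Substituting and isolating the coefficient of the highest power of $t$ gives, up to nonzero constants,
$$\mathrm{lc}_t(\tilde Q)(x,y)\ \propto\ (x-x_\infty)^2+(y-y_\infty)^2-d^2,$$
$$\mathrm{lc}_t(\tilde P)(x,y)\ \propto\ c_k(x-x_\infty)+c_k'(y-y_\infty).$$

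The final step is geometric. The zero locus of $\mathrm{lc}_t(\tilde Q)$ is the circle $\mathcal K$ of radius $d$ centred at $P_\infty$, while that of $\mathrm{lc}_t(\tilde P)$ is the line $\mathcal L$ through $P_\infty$ perpendicular to the tangent direction $(c_k,c_k')$ of $\mathcal C$ at $P_\infty$---that is, the normal line to $\mathcal C$ at $P_\infty$. Their intersection $\mathcal K\cap\mathcal L$ consists of exactly the two points $P_\infty\pm d\,\mathcal N(P_\infty)$, which are by definition $P_{\pm\infty}=\lim_{t\to\infty}\phi_d(t)$, so the lemma follows. The main subtlety I anticipate is the case $k>1$, where the first-order Laurent coefficients vanish and the apparent $t$-degrees of $U,V,\tilde P$ drop below their generic values; however, $(c_k,c_k')$ still encodes the true tangent direction of $\mathcal C$ at $P_\infty$, so the geometric description of the zero locus of $\mathrm{lc}_t(\tilde P)$ as the normal line at $P_\infty$ remains correct and the conclusion is unchanged.
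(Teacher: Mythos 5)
The paper itself does not prove this lemma; it only cites Appendix I of \cite{ACD15}, and your argument is in substance the standard one used there: pass to $\tilde P,\tilde Q$, compute their leading $t$-coefficients via the expansion of $\phi(t)$ at $t=\infty$, and intersect the resulting circle and normal line. Your central computation is correct, including the delicate point that when $k>1$ the bracket $\mathcal X'(x-\mathcal X)+\mathcal Y'(y-\mathcal Y)$ still has no terms of order higher than $t^{-k-1}$, so the coefficient of $t^{3w-k-1}$ in $\tilde P=W^3\bigl[\mathcal X'(x-\mathcal X)+\mathcal Y'(y-\mathcal Y)\bigr]$ receives no cross contributions and equals $-k\,w_0^3\bigl[c_k(x-x_\infty)+c_k'(y-y_\infty)\bigr]$; the identification of $\mathcal K\cap\mathcal L$ with $P_\infty\pm d\,\mathcal N(P_\infty)=P_{\pm\infty}$ is also right.

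The one genuine slip is in your reduction step. According to the paper's footnote, the $t$-content of $\tilde P$ is the gcd of the coefficients of $\tilde P$ \emph{viewed as a polynomial in $x,y$ with coefficients in $\mathbb{Q}[t]$}; it is therefore a polynomial $c_P(t)$ in $t$, not a polynomial $c_P(x,y)$ in $x,y$ as you assert (this is precisely why removing it kills extraneous circle factors of the resultant coming from the roots of $c_P$). So the identity $\mathrm{lc}_t(\tilde P)=c_P(x,y)\cdot\mathrm{lc}_t(P)$ is based on a misreading. Fortunately the correct relation is $\tilde P=c_P(t)\,P$, whence $\mathrm{lc}_t(\tilde P)=\mathrm{lc}(c_P)\cdot\mathrm{lc}_t(P)$ with $\mathrm{lc}(c_P)$ a nonzero constant; the leading coefficients of $P$ and $\tilde P$ (and likewise of $Q$ and $\tilde Q$) thus have \emph{identical} zero loci, which makes your reduction not only valid but an equality rather than an inclusion. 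With that sentence corrected, the proof stands.
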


\subsection{Factorization of resultants}
Let ${\mathcal M}_p$, ${\mathcal M}_q$ be two algebraic curves without common components, and let $p(x,y)$, $q(x,y)$ be the polynomials implicitly defining ${\mathcal M}_p$ and ${\mathcal M}_q$. Since ${\mathcal M}_p$, ${\mathcal M}_q$ do not have common components, ${\mathcal M}_p$, ${\mathcal M}_q$ intersect at finitely many points. Furthermore, each intersection point has an {\it intersection multiplicity}. The notion of intersection multiplicity is described, for instance, in \S IV.5.1 \cite{walker}, or in \S 1.6 \cite{Fulton}. In the general case, computing the intersection multiplicity requires certain technicalities; however, when the intersection point is regular for both ${\mathcal M}_p$ and ${\mathcal M}_q$ and the tangent line at such a point is different for ${\mathcal M}_p$ and ${\mathcal M}_q$, the intersection multiplicity is 1. Now let 
\[\mbox{Res}_y(p(x,y),q(x,y))=c\sum_{i=1}^r (x-\alpha_i)^{\beta_i},\]where $c$ is constant, and the $\alpha_i$ are the (possibly complex) roots of $\mbox{Res}_y(p,q)$. Then (see Proposition 5 \cite{Buse}, and \S 1.6 \cite{Fulton}) we have the following result.

\begin{proposition} \label{prop-aux}
Suppose that $x=\alpha_i$ is not a common vertical asymptote of ${\mathcal M}_p$ and ${\mathcal M}_q$. For all $i=1,\ldots,r$, the integer $\beta_i$ equals the sum of all the intersection multiplicities of the points $z_j = (x_j, y_j)\in {\mathcal M}_p \cap {\mathcal M}_q$ such that $x_j=\alpha_i$. In particular, if the line $x=\alpha_i$ does not contain any point of ${\mathcal M}_p \cap {\mathcal M}_q$ which is singular for either ${\mathcal M}_p$ or ${\mathcal M}_q$, and no point of ${\mathcal M}_p\cap{\mathcal M}_q$ where the tangent lines to ${\mathcal M}_p$ and ${\mathcal M}_q$ coincide, then $\beta_i$ is equal to the number of (possibly complex) different points of ${\mathcal M}_p \cap {\mathcal M}_q$ lying on the line $x=\alpha_i$. 
\end{proposition}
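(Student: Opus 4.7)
The statement is a classical fact about resultants and intersection multiplicities, so the proof plan is essentially to reduce to the standard algebraic setup of \S1.6 of \cite{Fulton}. The approach I would take is to view the common points of $\mathcal{M}_p$ and $\mathcal{M}_q$ lying above $x=\alpha_i$ via the vertical projection $\pi:\mathcal{M}_p\cap\mathcal{M}_q\to \mathbb{A}^1$, $(x,y)\mapsto x$, and to identify the multiplicity of $(x-\alpha_i)$ in $\mbox{Res}_y(p,q)$ as the total length of the fiber $\pi^{-1}(\alpha_i)$ as a zero-dimensional scheme. The intersection multiplicity at each $z_j=(x_j,y_j)$ is $I(z_j;p,q)=\dim_{\overline{\mathbb{Q}}}\mathcal{O}_{z_j,\mathbb{A}^2}/(p,q)$, and the sum of these dimensions over $z_j$ in the fiber equals $\dim_{\overline{\mathbb{Q}}}\mathcal{O}_{\alpha_i,\mathbb{A}^1}/(\mbox{Res}_y(p,q))$ by additivity over the closed points of the fiber.

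The first step is to recall the product-over-roots representation of the resultant: writing $p(x,y)=a(x)\prod_k(y-\rho_k(x))$ over the algebraic closure of the field of Puiseux series in $x-\alpha_i$, one has $\mbox{Res}_y(p,q)=a(x)^{\deg_y q}\prod_k q(x,\rho_k(x))$, and a symmetric identity holds exchanging the roles of $p$ and $q$. The hypothesis that $x=\alpha_i$ is not a common vertical asymptote ensures that $a(x)$ and the analogous leading coefficient of $q$ do not simultaneously vanish at $\alpha_i$, so that no spurious powers of $(x-\alpha_i)$ are introduced by the leading coefficients and every branch of $\mathcal{M}_p$ or $\mathcal{M}_q$ above $\alpha_i$ is an affine Puiseux branch. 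The second step is then to compute, for each common point $z_j$ above $\alpha_i$, the $(x-\alpha_i)$-adic order of the product $\prod_k q(x,\rho_k(x))$ restricted to the indices $k$ such that $\rho_k(\alpha_i)=y_j$, and to identify that order with $I(z_j;p,q)$ via the Puiseux-branch description of the local ring $\mathcal{O}_{z_j,\mathbb{A}^2}/(p)$; this is where I would invoke Proposition 5 of \cite{Buse} or the corresponding calculation in \S1.6 of \cite{Fulton}. Summing over $j$ then yields $\beta_i=\sum_j I(z_j;p,q)$.

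For the second (``in particular'') assertion, I would use the standard characterization of transverse intersection: if $z_j$ is simple for both $\mathcal{M}_p$ and $\mathcal{M}_q$ and the tangent lines to the two curves at $z_j$ are distinct, then the local ideal $(p,q)$ in $\mathcal{O}_{z_j,\mathbb{A}^2}$ is the maximal ideal, so $I(z_j;p,q)=1$. Under the hypotheses on $x=\alpha_i$ this applies to every $z_j$ on the line, and the first part immediately gives $\beta_i=\#(\mathcal{M}_p\cap\mathcal{M}_q\cap\{x=\alpha_i\})$.

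The main subtlety is the handling of the leading-coefficient condition: if $x=\alpha_i$ were a common vertical asymptote, then some branches of $\mathcal{M}_p$ and $\mathcal{M}_q$ would escape to infinity with compatible rates and would contribute spurious multiplicity to $\mbox{Res}_y(p,q)$ that is not accounted for by any affine intersection point. The excluded case in the hypothesis is exactly what makes the local dimension computation match the $(x-\alpha_i)$-adic order of the resultant; beyond this, the remaining work is purely the algebraic bookkeeping in \cite{Fulton} and \cite{Buse}, so I would cite those references rather than reproduce the calculation.
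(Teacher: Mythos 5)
Your proposal is correct: the product-over-roots (Poisson) formula for the resultant, the identification of the $(x-\alpha_i)$-adic order with the sum of local intersection multiplicities in the fiber, and the transversality criterion $I(z_j;p,q)=1$ at smooth points with distinct tangents are exactly the standard route. Note, however, that the paper offers no proof of this proposition at all --- it is stated with a bare citation to Proposition 5 of \cite{Buse} and \S 1.6 of \cite{Fulton}, which are precisely the sources your argument ultimately defers to --- so your sketch is, if anything, more detailed than the paper's own treatment.
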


\section{Square-freeness of $F(x,y)$.} \label{sec-main}

In this section we will give a complete description of the factorization of  $F(x,y)$ (see Eq. (\ref{structure})). As a consequence, we will provide necessary and sufficient conditions for $F(x,y)$ to be square-free. We will need first the following result.   

\begin{lemma} \label{aux2}
Let ${\mathcal C}$ be a curve parametrized by $\phi(t)$, not necessarily proper. Let $P_t(x,y,t)$ denote the partial derivative of $P(x,y,t)$ with respect to the variable $t$. Then there are just finitely many points $(x,y)\in {\mathcal O}_d({\mathcal C})$ such that there exists $t\in {\Bbb C}$ satisfying
\begin{equation}\label{pqt}
P(x,y,t)=Q(x,y,t)=P_t(x,y,t)=0.
\end{equation}
\end{lemma}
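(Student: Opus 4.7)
The plan is to analyze the variety $V(P,Q,P_t)\subset\CC^3$ and its projection $\pi(x,y,t)=(x,y)$; it suffices to show that $\pi(V(P,Q,P_t))\cap \mathcal{O}_d(\mathcal{C})$ is finite. First, observe that $\tilde{P}(x,y,t)=U(Wx-X)+V(Wy-Y)$ is linear in $(x,y)$, so after dividing by $c(t)=\mathrm{cont}_t(\tilde{P})$ we can write $P(x,y,t)=A(t)x+B(t)y+C(t)$ with $A=UW/c$, $B=VW/c$, $C=-(UX+VY)/c$, and consequently $P_t=A'(t)x+B'(t)y+C'(t)$ is again linear in $(x,y)$. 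Because $\mathrm{Res}_t(P,Q)=H\not\equiv 0$, every irreducible component of $V(P,Q)$, and hence of $V(P,Q,P_t)$, has dimension at most one.

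If $V(P,Q,P_t)$ is finite the lemma is immediate, so I will consider an arbitrary one-dimensional irreducible component $\mathcal{D}$. Its projection $\pi(\mathcal{D})$ is either a single point (contributing one element to the intersection with $\mathcal{O}_d(\mathcal{C})$) or an irreducible curve contained in $V(H)=V(F)\cup V(G)$. Any component inside $V(G)$ is an extraneous line and therefore meets $V(F)=\mathcal{O}_d(\mathcal{C})$ in only finitely many points. The proof thus reduces to ruling out the case in which $\pi(\mathcal{D})$ coincides with an irreducible component $\mathcal{O}$ of $\mathcal{O}_d(\mathcal{C})$ and $\mathcal{D}\subseteq V(P_t)$.

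To exclude this, I would parametrize $\mathcal{D}$ on an analytic branch as $t\mapsto(\phi_d^\epsilon(t),t)$ with $\epsilon=\pm 1$ fixed and a chosen branch of $\sqrt{U^2+V^2}$. A direct computation using $W\mathcal{X}=X$ and $W\mathcal{Y}=Y$ yields $\tilde{P}(\phi_d^\epsilon(t),t)\equiv 0$, and hence $P(\phi_d^\epsilon(t),t)\equiv 0$. Differentiating this identity in $t$ and using the linearity of $P$ in $(x,y)$ gives
\[
P_t(\phi_d^\epsilon(t),t)=-A(t)\,(\phi_d^\epsilon)_x'(t)-B(t)\,(\phi_d^\epsilon)_y'(t).
\]
Imposing $\mathcal{D}\subseteq V(P_t)$ makes the left-hand side vanish identically. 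Substituting $U=W^2\mathcal{X}'$, $V=W^2\mathcal{Y}'$ together with the classical identity $(\phi_d^\epsilon)'(t)=(1-\epsilon d\,\kappa(t))\,\phi'(t)$ for the signed curvature $\kappa$, the right-hand side reduces to
\[
-\frac{W(t)^3}{c(t)}\,\bigl(1-\epsilon d\,\kappa(t)\bigr)\,\bigl(\mathcal{X}'(t)^2+\mathcal{Y}'(t)^2\bigr).
\]
Since $W$ and $c$ are nonzero polynomials and $\mathcal{X}'^2+\mathcal{Y}'^2\not\equiv 0$ (as $\phi$ is a non-constant real parametrization), this forces $\kappa(t)\equiv\epsilon/d$; but a planar curve of constant curvature is either a line or a circle, contradicting the standing hypothesis on $\mathcal{C}$.

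The main obstacle is this last algebraic simplification: the differentiation of $P(\phi_d^\epsilon(t),t)$ must be carried out along an algebraic (not rational) branch of the offset, and one has to keep careful track of the content $c(t)$ and of the identity $(\phi_d^\epsilon)'=(1-\epsilon d\,\kappa)\phi'$ in an arbitrary, non-arclength parametrization. Once the identity is in place, the conclusion follows directly from the assumption that $\mathcal{C}$ is neither a line nor a circle, with no further geometric analysis required.
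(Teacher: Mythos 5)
Your proof is correct and follows essentially the same route as the paper's: both arguments reduce the claim to the identical vanishing of $P_t$ along a parametrized branch $t\mapsto\phi_d^{\epsilon}(t)$ of the offset and derive a contradiction from the resulting differential identity. The only real difference is that where you carry out the computation explicitly and conclude via constant curvature, the paper invokes Lemma 8 of \cite{paper} to conclude that ${\mathcal O}_d({\mathcal C})$ would have infinitely many local singularities --- two equivalent readings of the same identity, since, up to a nonvanishing factor, $P_t(\phi_d^{\epsilon}(t),t)$ vanishes exactly where $(\phi_d^{\epsilon})'(t)$ does.
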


\begin{proof} Let $\phi_d(t)=(x_d(t),y_d(t))$ be the points of the offset generated by the parameter $t$ (see Eq. \eqref{offset}). Hence, we have that 
\begin{equation}\label{pq}
P(x_d(t),y_d(t),t)=Q(x_d(t),y_d(t),t)=0,
\end{equation}
i.e. the functions $P(x_d(t),y_d(t),t),Q(x_d(t),y_d(t),t)$ are identically zero. Now assume that the statement is false. Then the function 
$P_t(x_d(t),y_d(t),t)$ is also identically zero. Therefore, by arguing as in Lemma 8 of \cite{paper}, we deduce that ${\mathcal O}_d({\mathcal C})$ has infinitely many local singularities, which cannot be.
\end{proof}

Now we will separately consider the case when the parametrization $\phi(t)$ is proper, and the case when its tracing index is bigger than $1$. 

\subsection{Case of proper parametrizations.}\label{prop-case}

In this subsection we will assume that the parametrization $\phi(t)$ is proper. Recalling the notation in Eq. \eqref{structure} and since ${\mathcal O}_d({\mathcal C})$ has at most two components, we can write 
\begin{equation} \label{F}
F(x,y)=\left(f_1(x,y)\right)^r\cdot \left(f_2(x,y)\right)^s,
\end{equation}
where $f_1(x,y)$, $f_2(x,y)$ are irreducible, therefore square-free, and $f_1(x,y)$ is non-constant. If ${\mathcal O}_d({\mathcal C})$ has just one component, then $f_2(x,y):=1$. Furthermore, if ${\mathcal O}_d({\mathcal C})$ is reducible and ${\mathcal O}_d({\mathcal C})$ has some special component (recall that there is at most one component of this type), we will assume that $f_2(x,y)$ implicitly represents this special component.   

\begin{lemma}\label{line}
The polynomial $f_1(x,y)$ explicitly depends on $x$ and $y$. If $f_2(x,y)$ is not constant, then it also explicitly depends on $x$ and $y$.
\end{lemma}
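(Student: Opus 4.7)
The plan is to argue by contradiction: I assume one of $f_1,f_2$ is non-constant and depends on only one variable, deduce that the corresponding irreducible component of ${\mathcal O}_d({\mathcal C})$ is a coordinate-aligned line, and then conclude that ${\mathcal C}$ would itself have to be a line, contradicting the standing hypothesis. Since $f_1$ and $f_2$ are irreducible, a polynomial factor in a single variable must be linear, so after possibly swapping the roles of $x$ and $y$ I may reduce to the case $f(x,y)=x-a$ for some constant $a$, in which case $L=\{x=a\}$ is an irreducible component of ${\mathcal O}_d({\mathcal C})$.

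My next step is to extract an infinite family of parameter values generating points on $L$. By the definition of ${\mathcal O}_d({\mathcal C})$, the image of $\phi_d$ (across both sign choices in \eqref{offset}) is Zariski-dense in ${\mathcal O}_d({\mathcal C})$, so for one fixed choice of sign there is an infinite set $T$ of parameters $t$ at which $\phi_d(t)\in L$. Removing the finitely many $t$ with $W(t)=0$, $U^2(t)+V^2(t)=0$, $\phi(t)$ singular on ${\mathcal C}$, $\phi_d(t)$ singular on ${\mathcal O}_d({\mathcal C})$, or $\phi_d(t)=P_{\pm\infty}$ leaves an infinite subset $T'$ at which both $\phi(t)$ and $\phi_d(t)$ are regular and the offset parametrization is well defined.

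The heart of the argument is the classical offset-tangent parallelism. Differentiating the relation $\phi_d(t)=\phi(t)\pm d\,{\mathcal N}(\phi(t))$ and using $|{\mathcal N}|^2\equiv 1$---so that $\tfrac{d}{dt}{\mathcal N}\perp {\mathcal N}$, hence parallel to $\phi'(t)$---yields $\phi_d'(t)=\lambda(t)\,\phi'(t)$ for some scalar $\lambda(t)$. For each $t\in T'$ the vector $\phi_d'(t)$ is nonzero and tangent to $L$, hence vertical; parallelism then forces $\phi'(t)$ to be vertical as well, i.e.\ $\mathcal{X}'(t)=U(t)/W^2(t)=0$. Since $U(t)$ is a polynomial with infinitely many zeros, $U\equiv 0$, so $\mathcal{X}(t)$ is constant and ${\mathcal C}$ is the vertical line $\{x=\mathcal{X}\}$---contradicting the hypothesis that ${\mathcal C}$ is not a line. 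The case where $f$ depends only on $y$ is completely symmetric.

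The main difficulty I anticipate is bookkeeping rather than a deep obstruction: I must verify that the listed finite exceptional sets cannot exhaust $T$ and that the two sign choices in \eqref{offset} are treated consistently so that the parallelism really survives on an infinite subset. One degenerate configuration worth ruling out is $\lambda\equiv 0$, which would make $\phi_d$ constant and force ${\mathcal C}$ to be a circle of radius $d$; this is excluded by hypothesis, so it does not arise. Note also that the argument does not distinguish between $f_1$ and $f_2$: whether $L$ is simple or special, each of its generic points has at least one (and in the special case, two) generating points on ${\mathcal C}$, and every such point contributes a vertical tangent to ${\mathcal C}$.
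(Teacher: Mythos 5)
Your proof is correct in substance but takes a genuinely different route from the paper's. The paper argues globally: if an irreducible factor of $F$ depended on a single variable, ${\mathcal O}_d({\mathcal C})$ would contain a line $L$; since ${\mathcal C}\subset {\mathcal O}_d({\mathcal O}_d({\mathcal C}))$ and the offset of a line is a pair of parallel lines, the irreducible curve ${\mathcal C}$ would itself have to be a line, contradicting the standing hypothesis. You instead argue infinitesimally via the tangent-parallelism $\phi_d'=\lambda\,\phi'$, concluding $U\equiv 0$ and hence that ${\mathcal C}$ is a vertical line. The two arguments are cousins---the inclusion ${\mathcal C}\subset{\mathcal O}_d({\mathcal O}_d({\mathcal C}))$ the paper quotes is itself a consequence of the normal/tangent duality you differentiate---but yours is more self-contained, needing only the parametrization and the definition of $U$, at the price of the bookkeeping you acknowledge: you must upgrade ``$\phi_d(t)\in L$ for infinitely many $t$'' to ``$x_d(t)\equiv a$'' (which does hold, either because $x_d$ is an algebraic function with infinitely many zeros of $x_d-a$, or because $L$, being an irreducible component of the Zariski closure of the image of one sign-branch, must contain that entire branch; without this step tangency of $\phi_d'$ to $L$ at isolated parameters would not follow), and you must rule out $\lambda$ vanishing on an infinite set (if it did, $\phi_d$ would be constant and $L$ a point, not a line---your circle remark handles this). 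What the paper's approach buys is brevity, though it is terser about why ${\mathcal C}$, sitting inside ${\mathcal O}_d({\mathcal O}_d({\mathcal C}))$, must coincide with one of the two lines parallel to $L$ rather than with some other component; what yours buys is independence from the offset-of-offset machinery. One caveat shared by both write-ups: an irreducible univariate polynomial need not be linear over the rationals, but a component of the offset of a real curve carries infinitely many real points, so a one-variable component is indeed a real coordinate-parallel line (or a finite union of such, which changes nothing in either argument).
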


\begin{proof} We prove the result for $f_1(x,y)$; similarly for $f_2(x,y)$, in the case when it is not constant. Suppose that $f_1(x,y)$ depends on just one variable, $x$ or $y$. Then ${\mathcal O}_d({\mathcal C})$ contains one line. Since ${\mathcal C}\subset {\mathcal O}_d({\mathcal O}_d({\mathcal C}))$ and the offset of a line is a pair of two parallel lines, ${\mathcal C}$ contains at least one line. On the other hand, since ${\mathcal C}$ is rational by hypothesis, it is irreducible. Hence, if ${\mathcal C}$ contains a line then ${\mathcal C}$ must be a line. However, this is impossible because by hypothesis ${\mathcal C}$ is not a line. 
\end{proof}

Now let us consider the set ${\mathcal A}$ of the $y_0$s satisfying some of the following conditions:

\begin{itemize}
\item [(1)] The intersection of the line $y=y_0$ with ${\mathcal O}_d({\mathcal C})$ contains some point also belonging to the curve $G(x,y)=0$.
\item [(2)] The leading coefficients of $P(x,y,t)$ and $Q(x,y,t)$ with respect to $t$ identically vanish when $y=y_0$. 
\item [(3)] The line $y=y_0$ contains a local singularity of ${\mathcal O}_d({\mathcal C})$. 
\item [(4)] The line $y=y_0$ is tangent to ${\mathcal O}_d({\mathcal C})$.
\item [(5)] There exist $x_0,t_0$ such that Eq. \eqref{pqt} holds for $(x_0,y_0,t_0)$.
\item [(6)] The line $y=y_0$ contains either a point of a simple component of ${\mathcal O}_d({\mathcal C})$ generated by more than one value of $t$, or a point of a special component of ${\mathcal O}_d({\mathcal C})$ generated by more than two values of $t$.

\end{itemize}

Notice that in particular, the intersection points between two components of ${\mathcal O}_d({\mathcal C})$ satisfy condition (6). Furthermore, from Lemma \ref{l-aux} we have that condition (2) implies $y_0= y_{\pm \infty}$. Then we have the following result.

\begin{lemma} \label{finite}
${\mathcal A}$ is a finite set.
\end{lemma}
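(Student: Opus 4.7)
The plan is to show that each of conditions (1)--(6) individually contributes only finitely many values of $y_0$; since $\mathcal{A}$ is a finite union of such subsets, it will follow that $\mathcal{A}$ itself is finite.

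First I would dispose of conditions (2), (3), and (5), which are essentially immediate from earlier results. Condition (2) is handled by the comment preceding the statement: by Lemma \ref{l-aux}, the simultaneous vanishing of the leading coefficients of $P$ and $Q$ in $t$ along the whole line $y=y_0$ pins the only admissible points of the offset on this line to $P_{\pm\infty}$, forcing $y_0\in\{y_{+\infty},y_{-\infty}\}$. Condition (3) is the standard fact, already recalled in Section \ref{sec-prelim}, that any algebraic curve has finitely many local singularities. Condition (5) is exactly the content of Lemma \ref{aux2}, which bounds the set of admissible $(x,y)\in\mathcal{O}_d(\mathcal{C})$ and hence their $y$-coordinates.

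Next I would address (1) and (4) by an intersection-type argument. For (1), since $G$ consists of extraneous linear factors and $F$ cuts out the offset, the polynomials $F$ and $G$ share no irreducible component, so $V(F)\cap V(G)$ is finite and its $y$-coordinates form a finite set. For (4), Lemma \ref{line} rules out any horizontal line as a component of $\mathcal{O}_d(\mathcal{C})$: no irreducible factor of $F$ depends on $x$ alone. Consequently the square-free part $F^\star$ and its partial $F^\star_x$ share no common factor, so the horizontal tangent lines at regular points correspond to the finite intersection $V(F^\star)\cap V(F^\star_x)$, and the horizontal tangents supported at singular points add only the finitely many $y$-values coming from the singular locus.

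The hard part will be condition (6), where properness of $\phi(t)$ enters decisively. For a simple component $\mathcal{V}_d$, the very definition of \emph{simple} says that outside a finite exceptional subset each point of $\mathcal{V}_d$ is generated by a unique $P\in\mathcal{C}$; by properness, outside a further finite set of self-intersection parameters, each such $P\in\mathcal{C}$ has a unique preimage $t$. Composing the two exceptional sets shows that only finitely many points of $\mathcal{V}_d$ are generated by more than one value of $t$. For a special component the argument is parallel, with Lemma \ref{nocircle} replacing the definition of simple: almost every point of a special component is generated by exactly two points of $\mathcal{C}$, and properness then forces ``exactly two values of $t$'' outside a finite set. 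Projecting the resulting finite exceptional subsets onto the $y$-axis and taking the union with the finite sets from the other five conditions completes the proof.
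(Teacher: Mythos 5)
Your proof is correct and follows essentially the same route as the paper: the paper dismisses conditions (1)--(5) as clearly contributing finitely many values of $y_0$ and concentrates, exactly as you do, on condition (6), combining properness with the definition of a simple component and with Lemma \ref{nocircle} for the special component. The additional detail you supply for (1)--(5) is sound and simply makes explicit what the paper leaves as ``clear''.
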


\begin{proof} It is clear that there are finitely many $y_0$s satisfying (1), (2), (3), (4) and (5). So let us see that there are also finitely many $y_0$s satisfying (6). If ${\mathcal O}_d({\mathcal C})$ does not have any special component, since by assumption ${\mathcal C}$ is properly parametrized there are finitely many points of ${\mathcal O}_d({\mathcal C})$ generated by more than one value of the parameter $t$. If ${\mathcal O}_d({\mathcal C})$ has a special component ${\mathcal V}$, by Lemma \ref{nocircle} a generic point of ${\mathcal V}$ is generated by two points of ${\mathcal C}$; since ${\mathcal C}$ is by assumption properly parametrized, this means that a generic point of ${\mathcal V}$ is generated by exactly two values of $t$.  
\end{proof}

Therefore, a \emph{generic} $y_0$ does {\it not} satisfy any condition (1)--(6). This is crucial in the next theorem, which is our first important result. 

\begin{theorem} \label{libre}
Suppose that ${\mathcal C}$ is properly parametrized. 
\begin{itemize}
\item [(1)] If ${\mathcal O}_d({\mathcal C})$ does not have any special component, then \[F(x,y)=f_1(x,y)\cdot f_2(x,y).\]
\item [(2)] If ${\mathcal O}_d({\mathcal C})$ has a special component, then \[F(x,y)=f_1(x,y)\cdot \left(f_2(x,y)\right)^2.\]
\end{itemize}
\end{theorem}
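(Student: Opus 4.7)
The plan is to pick a generic horizontal line $y=y_0$ (specifically $y_0\notin {\mathcal A}$) and read off $r,s$ by comparing two expressions for the multiplicity of each root of $F(x,y_0)$. On one hand, $F(x,y_0)=(f_1(x,y_0))^r\cdot (f_2(x,y_0))^s$; on the other hand, by Lemma \ref{l-aux} and condition (2), the $t$-leading coefficients of $P,Q$ do not both vanish when $y=y_0$, so specialization commutes with the resultant, giving
\[\mbox{Res}_t\bigl(P(x,y_0,t),Q(x,y_0,t)\bigr)=H(x,y_0)=F(x,y_0)\cdot G(x,y_0),\]
and I would invoke Proposition \ref{prop-aux} applied to the plane curves $P(x,y_0,t)=0$, $Q(x,y_0,t)=0$ in the $(x,t)$-plane (eliminating $t$) to compute that multiplicity intersection-theoretically.

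First I would check that for $y_0\notin {\mathcal A}$, each of $f_1(x,y_0),f_2(x,y_0)$ is square-free with no common roots: conditions (3) and (4) force $y=y_0$ to meet each component of ${\mathcal O}_d({\mathcal C})$ at smooth points with non-horizontal tangent (so each linear factor appears once), and condition (6) plus the fact that the two offset components meet at finitely many points separates the root sets of $f_1(x,y_0)$ and $f_2(x,y_0)$. Condition (1) guarantees these roots are disjoint from those of $G(x,y_0)$. Thus each root $\alpha$ of $F(x,y_0)$ appears with multiplicity exactly $r$ (if the corresponding point $(\alpha,y_0)$ lies on $f_1=0$) or exactly $s$ (if on $f_2=0$).

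Next I would count, via Proposition \ref{prop-aux}, the multiplicity of $(x-\alpha)$ in $\mbox{Res}_t(P(x,y_0,t),Q(x,y_0,t))$ as the sum, over the intersection points $(\alpha,\tau)$ of the two curves in the $(x,t)$-plane, of their intersection multiplicities. Here properness plays its role: by condition (6) a simple-component point $(\alpha,y_0)$ is generated by exactly one value of $t$ (one intersection $(\alpha,\tau)$), while by Lemma \ref{nocircle} together with properness and condition (6), a special-component point is generated by exactly two values of $t$ (two intersections $(\alpha,\tau_1),(\alpha,\tau_2)$). Combining this count with the multiplicity reading from the factorization of $F(x,y_0)$ yields $r=1$ always, $s=1$ in case (1), and $s=2$ in case (2).

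The main obstacle is the technical verification that each intersection point $(\alpha,\tau)$ contributes intersection multiplicity exactly $1$ (without which the count above would only give inequalities). Condition (5) says $P_t(\alpha,y_0,\tau)\neq 0$, so the curve $P(x,y_0,t)=0$ is smooth at $(\alpha,\tau)$ and its tangent is not the vertical line $x=\alpha$; what is needed is that $Q(x,y_0,t)=0$ is also smooth at the point and that its tangent direction differs from that of $P=0$, equivalently that the Jacobian $P_x Q_t - P_t Q_x$ does not vanish at $(\alpha,y_0,\tau)$. I would handle this by translating the vanishing of that Jacobian into the statement that the line $y=y_0$ is tangent to ${\mathcal O}_d({\mathcal C})$ at $(\alpha,y_0)$ or that $(\alpha,y_0)$ is a local singularity of the offset, both of which are excluded by conditions (4) and (3) for $y_0\notin{\mathcal A}$. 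Once this step is in place, the proposition applies cleanly and the conclusion follows.
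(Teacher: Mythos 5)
Your proposal follows essentially the same route as the paper's proof: restrict to a generic line $y=y_0$ avoiding the exceptional set ${\mathcal A}$, use Lemma \ref{l-aux} and condition (2) so that the resultant specializes, and then apply Proposition \ref{prop-aux} together with the count of generating parameter values (one per point on a simple component, two on a special component, via properness and Lemma \ref{nocircle}) to read off the exponents. The only difference is one of emphasis: you make explicit the transversality verification (non-vanishing of $P_xQ_t-P_tQ_x$, reduced to conditions (3)--(5)) that the paper disposes of more tersely in its cases (ii)--(iii), so the argument is correct and matches the paper's.
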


\begin{proof} (1) Let $F(x,y)=\left(f_1(x,y)\right)^{r_1}\cdot \left(f_2(x,y)\right)^{r_2}$. We want to prove that $r_1=r_2=1$. By Lemma \ref{line} $f_1(x,y)$ and $f_2(x,y)$, when it is not constant, explicitly depend on $x$ and $y$. Therefore it suffices to show that for a generic $y_0$, $f_1(x,y_0)$ and $f_2(x,y_0)$ are square-free. Let $y=y_0$ be generic, so that $y_0$ does not satisfy any condition (1)--(6). In particular, the leading coefficients of $P(x,y,t)$ and $Q(x,y,t)$ do not identically vanish for $y=y_0$, so $\mbox{Res}_t(P(x,y,t),Q(x,y,t))$ specializes properly, i.e. \[H(x,y_0)=\mbox{Res}_t(P(x,y_0,t),Q(x,y_0,t))\](see Lemma 4.3.1 in \cite{winkler}). Since $y_0$ does not satisfy condition (1), the line $y=y_0$ does not intersect $H(x,y)=0$ in any point both belonging to ${\mathcal O}_d({\mathcal C})$ and to the curve $G(x,y)=0$. Moreover $y_0$ does not satisfy conditions (3), (4), (6) either, and therefore we have that 
\begin{equation}\label{H}
H(x,y_0)=(x-x_1)^{r_1}\cdots (x-x_m)^{r_1}\cdot(x-\tilde{x}_1)^{r_2}\cdots (x-\tilde{x}_n)^{r_2}\cdot G(x,y_0),
\end{equation}
where the intersections between $f_1(x,y)=0$ and $y=y_0$ correspond to the $x_i$s, the intersections between $f_2(x,y)=0$ and $y=y_0$ correspond to the $\tilde{x}_j$s, and $G(x_i,y_0)\neq 0$, $G(\tilde{x}_j,y_0)\neq 0$ for $i=1,\ldots,m$, $j=1,\ldots,n$. Now assume that $r_\ell>1$, where $f_\ell(x,y)$ is nonconstant. We fix $\ell=1$; whenever $f_2(x,y)$ is nonconstant, one can argue in the same way for $\ell=2$. From Proposition \ref{prop-aux} we have the following possibilities:
\begin{itemize}
\item [(i)] For any $i=1,\ldots,m$ there are distinct $t_0,t_1$ with $P(x_i,y_0,t_0)=Q(x_i,y_0,t_0)=0$ and $P(x_i,y_0,t_1)=Q(x_i,y_0,t_1)=0$. But this implies that $(x_i,y_0)\in{\mathcal O}_d({\mathcal C})$ is simultaneously generated by $t_0$ and $t_1$, which cannot happen because $y_0$ does not satisfy condition (6). 
\item [(ii)] There is some $i=1,\ldots,m$ such that $P(x_i,y_0,t)$ and $Q(x_i,y_0,t)$ share a root $t_0$ of multiplicity at least 2. However, in that case $P_t$ vanishes at the point $(x_i,y_0,t_0)$. And this cannot happen because $y_0$ does not satisfy condition (5).   
\item [(iii)] The line $x=x_i$ is a common vertical asymptote of the curves (defined on the $xt$-plane) $P(x,y_0,t)=0$, $Q(x,y_0,t)=0$. But this cannot happen because $y_0$ does not satisfy condition (2), and hence $y_0\neq y_{\pm \infty}$.
\end{itemize}
So we conclude that $r_1=1$. Similarly for $r_2$, whenever $f_2(x,y)$ is not constant.

Let us address now the statement (2) of the theorem. First, one argues in the same way to reach Eq. \eqref{H}. Also, one can prove that $r_1=1$ as in statement (1). In order to prove that $r_2=2$, since $y=y_0$ does not satisfy condition (6), from Lemma \ref{nocircle} one has that every point of the special component of ${\mathcal O}_d({\mathcal C})$, lying on the line $y=y_0$, is generated by exactly two values of the parameter $t$. Therefore, by Proposition \ref{prop-aux} one has $r_2\geq 2$. Now assume that $r_2>2$. Again from Proposition \ref{prop-aux} we have the following possibilities:
\begin{itemize}
\item [(i)] For any $i=1,\ldots,n$ there are at least three different values $t_0,t_1,t_2$ with $P(x_i,y_0,t_k)=Q(x_i,y_0,t_k)=0$ for $k=1,2,3$. But this implies that $(x_i,y_0)\in{\mathcal O}_d({\mathcal C})$ is simultaneously generated by $t_0$, $t_1$, $t_2$, which cannot happen because $y_0$ does not satisfy condition (6). 
\item [(ii)] There is some $i=1,\ldots,n$ such that $P(x_i,y_0,t)$ and $Q(x_i,y_0,t)$ share a root $t_0$ of multiplicity at least 2. However, in that case $P_t$ vanishes at the point $(x_i,y_0,t_0)$. But this cannot happen because $y_0$ does not satisfy condition (5).   
\item [(iii)] The line $x=x_i$ is a common vertical asymptote of the curves (defined on the $xt$-plane) $P(x,y_0,t)=0$, $Q(x,y_0,t)=0$. But this cannot happen because $y_0$ does not satisfy condition (2), and hence $y_0\neq y_{\pm \infty}$.
\end{itemize}
So we conclude that $r_2=2$.

\end{proof}

\begin{figure}
\begin{center}
\includegraphics[scale=0.6]{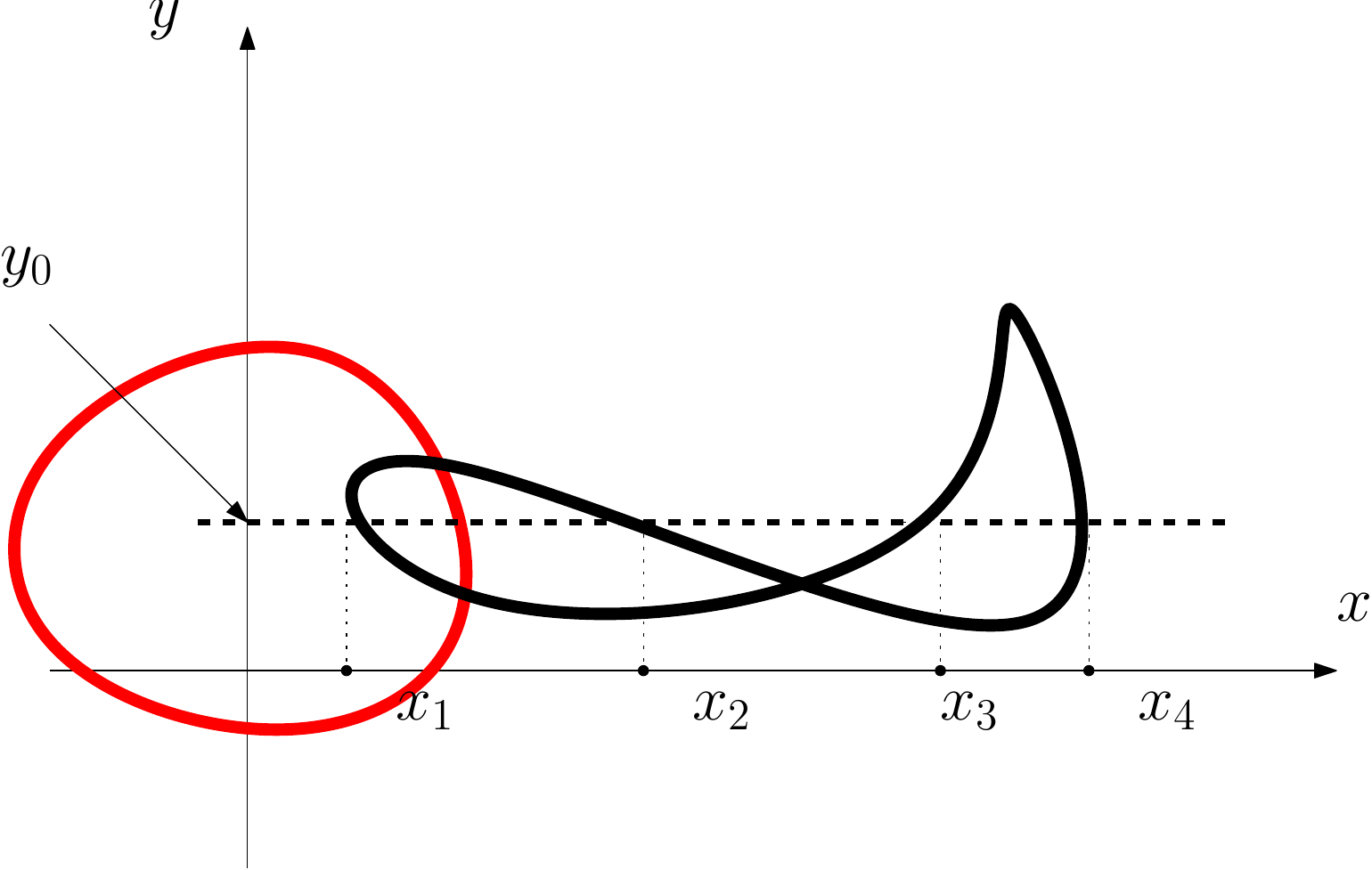}
\end{center}
\caption{The curves $G(x,y)=0$ (in red) and $f_1(x,y)=0$ (in black). The line $y=y_0$ intersects the curve $f_1(x,y)=0$ in four points, with $x$-coordinates $x_1,\ldots,x_4$}\label{fig:component}
\end{figure}

From Theorem \ref{libre} above and from Theorem 7 in \cite{Rafa1}, one gets the following result, which provides a condition for checking if a rational curve ${\mathcal C}$ is the offset to another curve (equivalently, if ${\mathcal O}_d({\mathcal C})$ has some special component).

\begin{theorem}\label{charact}
Let ${\mathcal C}$ be a rational curve properly parametrized by $\phi(t)$. The following statements are equivalent.
\begin{itemize}
\item [(1)] $F(x,y)$ is not square-free.
\item [(2)] ${\mathcal O}_d({\mathcal C})$ has a special component.  
\item [(3)] ${\mathcal C}$ is the (whole) offset to some other algebraic curve; namely, the special component of ${\mathcal O}_d({\mathcal C})$.
\end{itemize}
\end{theorem}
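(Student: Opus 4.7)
The plan is to derive the theorem as a short chain of equivalences from two already-established results: Theorem \ref{libre} of this paper (which controls the multiplicity structure of $F$) and Theorem 7 of \cite{Rafa1} (which characterizes the existence of special components).

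First I would prove (1) $\Leftrightarrow$ (2) by direct appeal to Theorem \ref{libre}. Recall from \eqref{F} that $f_1$ and $f_2$ are irreducible polynomials (with the convention $f_2\equiv 1$ when ${\mathcal O}_d({\mathcal C})$ has a single component). If ${\mathcal O}_d({\mathcal C})$ has no special component, Theorem \ref{libre}(1) gives $F(x,y)=f_1(x,y)\cdot f_2(x,y)$; this is either a single irreducible polynomial or a product of two distinct irreducibles (they must be distinct because they correspond to different irreducible components of ${\mathcal O}_d({\mathcal C})$, and two different irreducible components cannot share an irreducible defining factor). In either case $F$ is square-free, so (1) fails. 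Conversely, if ${\mathcal O}_d({\mathcal C})$ has a special component, Theorem \ref{libre}(2) yields $F(x,y)=f_1(x,y)\cdot (f_2(x,y))^2$, and since in this case $f_2$ is non-constant (it defines the special component), the factor $(f_2)^2$ witnesses that $F$ is not square-free, establishing (1).

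Next I would handle (2) $\Leftrightarrow$ (3) by quoting Theorem 7 of \cite{Rafa1}. That result states precisely that an irreducible component ${\mathcal V}_d$ of ${\mathcal O}_d({\mathcal C})$ is special if and only if ${\mathcal C}$ coincides with the whole offset ${\mathcal O}_d({\mathcal V}_d)$ of ${\mathcal V}_d$ at the same distance $d$. Thus the existence of a special component in ${\mathcal O}_d({\mathcal C})$ is equivalent to ${\mathcal C}$ itself being an offset curve, with the special component being the progenitor curve whose offset is ${\mathcal C}$. This gives (2) $\Leftrightarrow$ (3), completing the cycle.

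The only point that requires slight care — and the nearest thing to an obstacle — is ensuring in (1) $\Rightarrow$ the square-freeness of $F$ that, when both $f_1$ and $f_2$ are non-constant, they really are coprime. This follows immediately from the fact that $f_1$ and $f_2$ are \emph{irreducible} and correspond to \emph{distinct} components of ${\mathcal O}_d({\mathcal C})$; beyond that, the argument is simply a matter of packaging Theorem \ref{libre} together with the cited characterization of special components.
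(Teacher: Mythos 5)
Your proposal is correct and follows exactly the route the paper intends: the paper gives no written proof, stating only that the result follows from Theorem \ref{libre} together with Theorem 7 of \cite{Rafa1}, which is precisely the chain (1) $\Leftrightarrow$ (2) via the multiplicity structure of $F$ and (2) $\Leftrightarrow$ (3) via the characterization of special components that you spell out. Your added remark on the coprimality of $f_1$ and $f_2$ is a reasonable (and correct) bit of extra care that the paper leaves implicit.
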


\begin{corollary} \label{corolsq}
If $\mbox{Res}_t(P(x,y,t),Q(x,y,t))$ is square-free, then: (1) ${\mathcal O}_d({\mathcal C})$ does not have any special component; (2) ${\mathcal C}$ is not the offset of any algebraic curve.
\end{corollary}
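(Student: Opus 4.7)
The plan is to reduce this corollary to Theorem~\ref{charact} by observing that square-freeness of the full resultant $H(x,y)=\mbox{Res}_t(P,Q)$ transfers to its factor $F(x,y)$, at which point both conclusions fall out of the equivalences already proved.

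First, recall from Eq.~\eqref{structure} that $H(x,y)=F(x,y)\cdot G(x,y)$. If an irreducible factor $h(x,y)$ of $F(x,y)$ appeared with multiplicity at least $2$, then $h(x,y)^{2}$ would also divide $H(x,y)$, contradicting the hypothesized square-freeness of $H(x,y)$. Hence $F(x,y)$ is itself square-free.

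Next, since $\phi(t)$ is proper throughout this subsection, Theorem~\ref{charact} is available. Its three-way equivalence, read in contrapositive form, says that $F(x,y)$ square-free is equivalent to ${\mathcal O}_d({\mathcal C})$ having no special component, and also equivalent to ${\mathcal C}$ not being the (whole) offset of any other algebraic curve. Applying these equivalences to the square-free $F(x,y)$ obtained in the previous paragraph yields parts (1) and (2) of the corollary at once.

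The argument is essentially bookkeeping on top of Theorem~\ref{charact}; the only point to check is that the extraneous factor $G(x,y)$ cannot ``absorb'' a would-be square coming from $F(x,y)$, which the first step settles, so there is no substantive obstacle. (If one wished to drop the implicit properness assumption inherited from the subsection, one would additionally invoke the non-proper analogue of Theorem~\ref{libre} announced in the introduction, according to which tracing index $n\geq 2$ forces every factor in $F(x,y)$ to appear with multiplicity $\geq n\geq 2$; square-freeness of $H(x,y)$ then automatically forces $n=1$, reducing to the proper case treated here.)
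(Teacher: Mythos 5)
Your argument is correct and is exactly the one the paper leaves implicit: since $H(x,y)=F(x,y)\cdot G(x,y)$, square-freeness of the resultant forces $F(x,y)$ to be square-free, and the two conclusions then follow directly from the equivalences of Theorem~\ref{charact}. The paper states the corollary without proof precisely because this is the intended one-line reduction, so your write-up (including the observation that $G$ cannot absorb a repeated factor of $F$) matches the paper's approach.
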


\subsection{Case of non-necessarily proper parametrizations.}\label{noproper}

The aim of this section is to generalize Theorem \ref{libre} to the case of non-proper parametrizations, and to characterize from here the square-freeness of $F(x,y)$; hence, in this subsection we will assume that ${\mathcal C}$ is parametrized by a non-necessarily proper rational parametrization $\phi(t)$, with tracing index $n\geq 1$. We can write $F(x,y)$ as in Eq. \eqref{F}; furthermore if ${\mathcal O}_d({\mathcal C})$ has any special component we assume that it is represented by $f_2(x,y)$. We also need to introduce a set ${\mathcal A}$ of $y_0$s satisfying six conditions. The conditions (1)-(5) coincide with those in Section \ref{prop-case}; however, condition (6) is replaced by the following condition:

\begin{itemize}
\item [(6)] The line $y=y_0$ contains either a point of a simple component of ${\mathcal O}_d({\mathcal C})$ generated by more than $n$ values of $t$, or a point of a special component of ${\mathcal O}_d({\mathcal C})$ generated by more than $2n$ values of $t$. 
\end{itemize}

\noindent One can prove that ${\mathcal A}$ is finite in an analogous way to Lemma \ref{finite}.

Now we have the following theorem, which is a generalization of Theorem \ref{libre}. It can be proven in an analogous way to Theorem \ref{libre}. In the proof, that we ommit here, one needs to observe that since the tracing index of $\phi(t)$ is $n$, a generic point of a simple component of ${\mathcal O}_d({\mathcal C})$ comes from just one point of ${\mathcal C}$, i.e. it is generated by $n$ values of the parameter $t$. Also, a generic point of the special component of ${\mathcal O}_d({\mathcal C})$, if any, comes from exactly two points of ${\mathcal C}$, i.e. it is generated by $2n$ values of the parameter $t$. Note also that the lines $y=y_0$ containing the self-intersections of ${\mathcal O}_d({\mathcal C})$ satisfy condition (6). 

\begin{theorem} \label{libre2}
Suppose that ${\mathcal C}$ is parametrized by a rational parametrization of tracing index $n$.
\begin{itemize}
\item [(1)] If ${\mathcal O}_d({\mathcal C})$ does not have any special component, then \[F(x,y)=\left(f_1(x,y)\right)^n\cdot \left(f_2(x,y)\right)^{n}.\]
\item [(2)] If ${\mathcal O}_d({\mathcal C})$ has a special component, then \[F(x,y)=\left(f_1(x,y)\right)^n\cdot \left(f_2(x,y)\right)^{2n}.\]
\end{itemize}
\end{theorem}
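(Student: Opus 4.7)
The plan is to mirror the argument of Theorem \ref{libre} essentially verbatim, replacing the exponents $1$ and $2$ by $n$ and $2n$ and carrying the tracing index through every counting step. First I would fix a generic $y_0\notin {\mathcal A}$, where ${\mathcal A}$ is built as in Section \ref{prop-case} but with condition (6) replaced by its non-proper variant stated above; the finiteness of this new ${\mathcal A}$ is the natural analogue of Lemma \ref{finite}, using Lemma \ref{nocircle} together with the observation that, since the tracing index of $\phi(t)$ is $n$, a generic simple-component point of the offset is generated by exactly $n$ parameter values, and a generic special-component point by exactly $2n$ parameter values.

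Since $y_0$ violates none of conditions (1)--(5), the leading coefficients of $P$ and $Q$ with respect to $t$ do not identically vanish at $y=y_0$, so the resultant specializes properly: $H(x,y_0)=\mbox{Res}_t(P(x,y_0,t),Q(x,y_0,t))$. Writing $F(x,y)=f_1(x,y)^{r_1}f_2(x,y)^{r_2}$, with $f_2$ possibly constant under (1) and $f_2$ representing the special component under (2), the failure of conditions (1), (3), (4) yields the analogue of Eq. \eqref{H}:
\[
H(x,y_0)=\prod_{i=1}^{m}(x-x_i)^{r_1}\cdot \prod_{j=1}^{\ell}(x-\tilde{x}_j)^{r_2}\cdot G(x,y_0),
\]
where the $x_i$ (resp. $\tilde{x}_j$) are the intersections of $f_1(x,y_0)=0$ (resp. $f_2(x,y_0)=0$) with $y=y_0$, and $G$ does not vanish at any of these points.

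Next I would apply Proposition \ref{prop-aux} to read each exponent as the sum of intersection multiplicities of the planar curves $P(x,y_0,t)=0$ and $Q(x,y_0,t)=0$ above the relevant $x$-coordinate. Condition (5) prevents any common root in $t$ of multiplicity at least two (such a double root would force $P_t$ to vanish there) and condition (2) prevents common vertical asymptotes, so each intersection is simple and transverse. Therefore $r_1$ equals the number of distinct $t$-values generating the offset point $(x_i,y_0)$, and likewise for $r_2$. By the tracing index, a generic simple-component point is generated by exactly $n$ values of $t$, forcing $r_1=n$ (and also $r_2=n$ when $f_2$ represents a simple component); by Lemma \ref{nocircle}, a generic special-component point is generated by exactly two points of ${\mathcal C}$, each covered by $n$ parameter values, forcing $r_2=2n$ in case (2).

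The main obstacle I anticipate is the finiteness of the new ${\mathcal A}$ under the rephrased condition (6): one must argue that the offset points whose $t$-fiber exceeds $n$ (on a simple component) or $2n$ (on a special component) form a proper Zariski-closed subset of ${\mathcal O}_d({\mathcal C})$ and hence project onto finitely many values of $y$. This is precisely the observation flagged in the excerpt immediately after the revised condition (6); once it is in place, together with the remark that lines $y=y_0$ through self-intersections of ${\mathcal O}_d({\mathcal C})$ are subsumed by condition (6), the argument is a line-by-line translation of the proper case.
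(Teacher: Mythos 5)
Your proposal follows exactly the route the paper itself takes: the paper omits the detailed proof of Theorem \ref{libre2} precisely because it is the line-by-line translation of Theorem \ref{libre} that you describe, with the generic line $y=y_0$ avoiding the modified condition (6), proper specialization of the resultant, and Proposition \ref{prop-aux} converting the exponents into counts of generating parameter values, which the tracing index fixes at $n$ for simple components and $2n$ for the special one. The only point worth a moment's care (present equally in the paper's sketch) is that condition (6) only bounds the fiber size from above, so the lower bounds $r_1\geq n$ and $r_2\geq 2n$ rely on the finitely many points with \emph{smaller} fibers also being avoided by a generic $y_0$; your appeal to genericity covers this at the same level of rigor as the paper.
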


One can notice the analogy between the formulae in Theorem \ref{libre2}, and Eq. \eqref{res}. This was somehow expectable, since tracing the curve ${\mathcal C}$ $n$ times implies tracing also $n$ times each simple component of ${\mathcal O}_d({\mathcal C})$, and $2n$ times each special component of ${\mathcal O}_d({\mathcal C})$.  Finally we conclude with the following result, which can be derived from Theorem \ref{libre2}.

\begin{corollary}\label{corfinal}
The polynomial $F(x,y)$ is square-free iff the curve ${\mathcal C}$ is properly parametrized, and ${\mathcal O}_d({\mathcal C})$ does not have any special component.
\end{corollary}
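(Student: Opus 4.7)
The corollary follows directly by case analysis on the four possibilities covered by Theorem \ref{libre2}, so the plan is simply to unpack that theorem and check which cases produce a square-free polynomial.

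For the ``if'' direction, I would assume $\mathcal{C}$ is properly parametrized (tracing index $n=1$) and that $\mathcal{O}_d(\mathcal{C})$ has no special component. Then case (1) of Theorem \ref{libre2} gives $F(x,y)=f_1(x,y)\cdot f_2(x,y)$. If $\mathcal{O}_d(\mathcal{C})$ is irreducible, $f_2$ is constant and $F$ is a constant multiple of the irreducible polynomial $f_1$, hence square-free. Otherwise $f_1$ and $f_2$ are irreducible polynomials representing two distinct irreducible components of $\mathcal{O}_d(\mathcal{C})$, so they are coprime, and the product $f_1 f_2$ is again square-free.

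For the ``only if'' direction I would argue by contrapositive in two steps. First, suppose the tracing index is $n\geq 2$. By the convention fixed before Theorem \ref{libre2} (or by Lemma \ref{line}), $f_1$ is a non-constant irreducible polynomial, and in either case (1) or (2) of Theorem \ref{libre2} the factor $f_1(x,y)^n$ with $n\geq 2$ divides $F(x,y)$, so $F$ is not square-free. Second, suppose $\mathcal{O}_d(\mathcal{C})$ has a special component; then by our convention $f_2(x,y)$ is the non-constant irreducible polynomial defining it, and Theorem \ref{libre2}(2) yields that $f_2(x,y)^{2n}$ divides $F(x,y)$ with $2n\geq 2$, so again $F$ is not square-free. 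Combining both observations, if $F$ is square-free then necessarily $n=1$ and there is no special component.

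There is essentially no obstacle here beyond bookkeeping: the result is a direct reading of the multiplicity formulas in Theorem \ref{libre2}, once one remembers that irreducible factors of different components are distinct (so their product is square-free) and that $f_1$ and the special-component polynomial $f_2$ are non-constant. The only minor point worth being careful about is to handle separately the subcase in which $\mathcal{O}_d(\mathcal{C})$ has a single component, where $f_2\equiv 1$, to ensure that the ``if'' direction still yields a square-free $F$.
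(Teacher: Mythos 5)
Your proof is correct and follows exactly the route the paper intends: the paper gives no explicit proof, stating only that the corollary "can be derived from Theorem \ref{libre2}", and your case analysis of the exponents $n$ and $2n$ (together with the observations that $f_1$ is non-constant and that distinct irreducible components give coprime factors) is precisely that derivation.
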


\subsection{Examples.} \label{subsec-ex}

In this subsection we provide some examples illustrating the results in the two preceding subsections.

\begin{example}
Let ${\mathcal C}$ be the Cardioid, of implicit equation
$${x}^{4}+2\,{x}^{2}{y}^{2}+{y}^{4}+8\,{x}^{2}y+8\,{y}^{3}-16\,{x}^{2}=0.$$
This curve is rational and can be properly parametrized by 
$$
\left(\mathcal{X}(t),\mathcal{Y}(t)\right)=\left(\frac{X(t)}{W(t)},\frac{Y(t)}{W(t)}\right) =
 \left(\frac{-1024\,{t}^{3}}{ \left( 16\,{t}^{2}+1 \right) ^{2}},\frac{-128\,{t}^{2} \left( 16\,{t}^{2}-1\right) }{ \left( 16\,{t}^{2}+1 \right) ^{2}}\right) .
$$
Let us find the offset of ${\mathcal C}$ for $d=1$; for simplicity, we will denote this offset by ${\mathcal Z}$, i.e. ${\mathcal Z}={\mathcal O}_1({\mathcal C})$. By computing the resultant $H(x,y)$ in \eqref{lares}, we get that $H(x,y)$ is, up to a constant, 
the product of $F(x,y)$, 
 
\begin{eqnarray*}
F(x,y) &= &   {x}^{8}+4\,{x}^{6}{y}^{2}+6\,{x}^{4}{y}^{4}+4\,{x}^{2}{y}^{6}+{y}^{8}+
16\,{x}^{6}y+48\,{x}^{4}{y}^{3}+48\,{x}^{2}{y}^{5}+ \\
& & 16\,{y}^{7}-35\,{x}^{6}-9\,{x}^{4}{y}^{2}+87\,{x}^{2}{y}^{4}+61\,{y}^{6}-292\,{x}^{4}y-
328\,{x}^{2}{y}^{3}-\\
& & 36\,{y}^{5}+211\,{x}^{4}-234\,{x}^{2}{y}^{2}-189\,
{y}^{4}-40\,{x}^{2}y-232\,{y}^{3}-429\,{x}^{2}+\\ 
&&131\,{y}^{2}+316\,y+252 
\end{eqnarray*}
and an ``extraneous'' factor $G(x,y)$,
 $$G(x,y) = {x}^{2}+{y}^{2}+4\,y+4.$$
In this case, one can check that the curve defined by $F(x,y)$ is square-free and irreducible, so ${\mathcal Z}$ has just one, simple, component. Following the notation of Theorem \ref{libre}, we have $F(x,y)=f_1(x,y)\cdot f_2(x,y)$ with $f_1(x,y)$ square-free, and $f_2(x,y)=1$. The Cardioid, together with its offset for $d=1$, are shown in Figure \ref{fig:offcard}. 

\begin{figure}
\begin{center}
\includegraphics[scale=0.4]{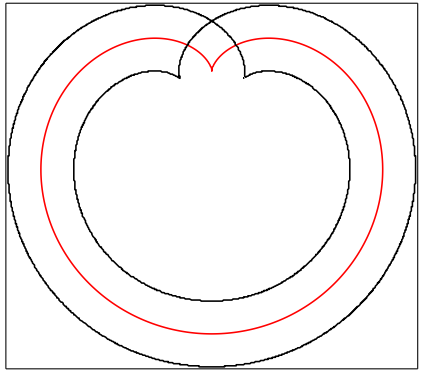}
\end{center}
\vspace{-5cm}
\caption{The Cardioid (in red) and the offset for $d=1$ (in black) . }\label{fig:offcard}
\end{figure}

\bigskip
Now let us study the offset of ${\mathcal Z}$ for $d=1$. According to Theorem 7 in \cite{Rafa1}, the offset of ${\mathcal Z}$ for $d=1$ must have a special component, namely the Cardioid. One can check that ${\mathcal Z}$ is rational; a proper parametrization of ${\mathcal Z}$ is $({\mathcal X}^{\star}(t),{\mathcal Y}^{\star}(t))$, where 
\begin{equation}\label{lapara}
\begin{array}{rcl}
 {\mathcal X}^{\star}(t) &=&    \displaystyle{\frac{ \left( {t}^{2}-9 \right)  \left( {t}^{6}-117\,{t}^{4}-1053\,{t}^{2
}+3456\,{t}^{3}+729\right) }{  \left( {t}^{2}+9 \right) ^{4}} },\\ 
{\mathcal Y}^{\star}(t) &=& \displaystyle{\frac{-18\, \left( {t}^{6}-16\,{t}^{5}-21\,{t}^{4}+864\,{t}^{3}-189\,{t}^{2}
-1296\,t+729 \right) t
 }{ \left( {t}^{2}+9 \right) ^{4} }}   .
\end{array}
\end{equation}
After computing the resultant $H(x,y)$ in Eq. \eqref{lares} for the above parametrization \eqref{lapara}, we check that $H(x,y)=F(x,y)\cdot G(x,y)$, where $G(x,y)=({x}^{2}+{y}^{2}+4\,y+4)^2$ and $F(x,y)$ factorizes into two polynomials. As predicted by statement (2) of Theorem \ref{libre}, we have 
$$F(x,y)=f_1(x,y)\cdot f_2(x,y)^2,$$
where $f_1(x,y)$ and  $f_2(x,y)$, with
\begin{eqnarray*}
f_1(x,y) &= &{x}^{8}+4\,{x}^{6}{y}^{2}+6\,{x}^{4}{y}^{4}+4\,{x}^{2}{y}^{6}+{y}^{8}+
16\,{x}^{6}y+48\,{x}^{4}{y}^{3}+48\,{x}^{2}{y}^{5}+\\
 &&16\,{y}^{7}-44\,{x}
^{6}-36\,{x}^{4}{y}^{2}+60\,{x}^{2}{y}^{4}+52\,{y}^{6}-400\,{x}^{4}y-
544\,{x}^{2}{y}^{3}-\\ 
&&144\,{y}^{5}+112\,{x}^{4}-864\,{x}^{2}{y}^{2}-720
\,{y}^{4}+128\,{x}^{2}y-640\,{y}^{3}-768\,{x}^{2}+\\&&2048\,{y}^{2}+4864\,y+3840,\\
 f_2(x,y) &= & {x}^{4}+2\,{x}^{2}{y}^{2}+{y}^{4}+8\,{x}^{2}y+8\,{y}^{3}-16\,{x}^{2}. \\
\end{eqnarray*}
Observe that, as expected, the curve defined by $f_2(x,y)$ is our initial curve, the Cardiod. Figure \ref{fig:offoffcard} shows the curve ${\mathcal Z}$ (i.e. ${\mathcal O}_1({\mathcal C}))$ in red color, and ${\mathcal O}_1({\mathcal Z})$ in black color. Moreover, the special component is plotted in dash-dotted line. 
 \begin{figure}
\begin{center}
\includegraphics[scale=0.4]{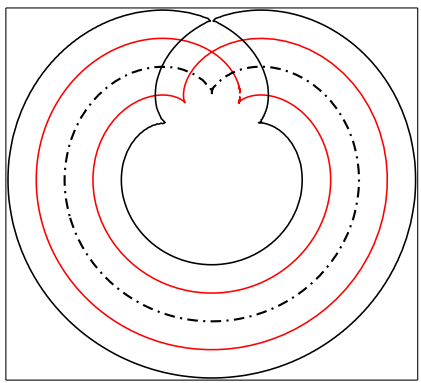}
\end{center}
\vspace{-5cm}
\caption{For  $d=1$, the offset of the Cardioid (in red), and the offset of the offset of the Cardioid (in black) . }\label{fig:offoffcard}
\end{figure}

Finally, let us reparametrize the curve $\mathcal{Z}$ by using the following non-linear change of parameter:
$$
t = s^3-2\,s^2+3\,s+5.
$$
By applying this change of parameter, we get a non-proper parametrization of ${\mathcal Z}$, of tracing index $n=3$. After computing the polynomial $F(x,y)$ with this non-proper parametrization, we find, as predicted by statement (2) of Theorem \ref{libre2}, $F(x,y)=f_1(x,y)^3\cdot f_2(x,y)^6$.

\end{example}

\begin{example} 
It is well known that the offset of the parabola is a rational curve. In this example, we analyze the offset of the offset of the parabola $y=\frac{1}{4}x^2$ for $d=6$, but considering a non-proper parametrization of the offset to $y=\frac{1}{4}x^2$. More precisely, let ${\mathcal C}$ be the curve defined by $y=\frac{1}{4}x^2$, and let ${\mathcal P}={\mathcal O}_6({\mathcal C})$. One can check that  
$$
\frac{X(t)}{W(t)} = \frac{4\, \left( {t}^{8}+6\,{t}^{6}-6{t}^{2}-1\right) {t}^{2}}{4\, \left( {t}^{4}+1 \right) {t}^{4}},
$$
$$
\frac{Y(t)}{W(t)} = \frac{{t}^{12}-{t}^{8}-48{t}^{6}-{t}^{4}+1}{4\, \left( {t}^{4}+1 \right) {t}^{4}} .
$$
is a parametrization of ${\mathcal P}$, with tracing index equal to 2. In this case, when computing the resultant $H(x,y)$ in Eq. \eqref{lares} we get the extraneous factor $ ( {x}^{2}+{y}^{2}-2\,y+1)^4$. Additionally, as predicted by Theorem \ref{libre2}, we have 
$$F(x,y)=f_1(x,y)^2\cdot f_2(x,y)^4,$$
where

\begin{eqnarray*}
f_1(x,y)&=&{x}^{6}+{x}^{4}{y}^{2}-10\,{x}^{4}y-8\,{x}^{2}{y}^{3}-431\,{x}^{4}-256\,{x}^{2}{y}^{2}+16\,{y}^{4}+\\ 
&&280\,{x}^{2}y-1184\,{y}^{3}+59328\,{x}^{2}+19600\,{y}^{2}+170496\,y-3154176,	\\
f_2(x,y)&=&x^2-4y.
\end{eqnarray*}

Figure \ref{fig:offoffpar} shows the curve ${\mathcal P}$ (the offset of $y=\frac{1}{4}x^2$ for $d=6$) in red and ${\mathcal O}_6({\mathcal P})$ in black. Moreover, the special component is plotted in dash-dotted line.

 \begin{figure}
\begin{center}
\includegraphics[scale=0.4]{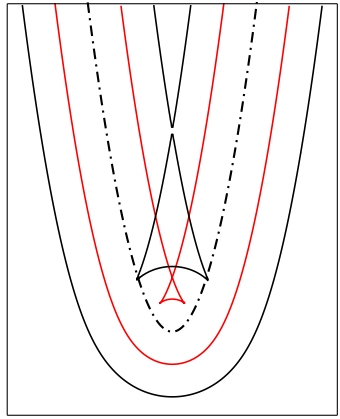}
\end{center}
\vspace{-5cm}
\caption{For $d=6$, the offset of the parabola (in red), and the offset of the offset of the parabola (in black) . }\label{fig:offoffpar}
\end{figure}
\end{example}

\section*{References}

\end{document}